\newtheorem{prethm}{{\bf  Theorem}}
\newenvironment{thm}{\begin{prethm}{\hspace{-0.5
               em}{\bf .}}}{\end{prethm}}
\newtheorem{prepro}{{\bf  Theorem}}
\newtheorem{precor}{{\bf  Corollary}}
\newenvironment{cor}{\begin{precor}{\hspace{-0.5
               em}{\bf .}}}{\end{precor}}
\newtheorem{preconj}{{\bf  Conjecture}}
\newtheorem{preremark}{{\bf  Remark}}
\newtheorem{preexample}{{\bf  Example}}
\newtheorem{preexam}{{\bf  Example}}
\newenvironment{exa*}{\begin{preexam}{\hspace{-0.5
               em}{\bf .}}}{\end{preexam}}
\newtheorem{prelem}{{\bf  Lemma}}
\newenvironment{lem}{\begin{prelem}{\hspace{-0.5
               em}{\bf .}}}{\end{prelem}}
\newtheorem{preproof}{{\bf  Proof.}}
\newenvironment{proof}[1]{\begin{preproof}{\rm
               #1}\hfill{$\Box$}}{\end{preproof}}
\title{\large \bf  The Prime Index Graph of a Group}
\author{\bf\small\sc
S. Akbari$^{a}$, A. Ashtab$^{a}$, F. Heydari
\thanks
{
Corresponding author.\newline \indent
~
{\it Keywords}: Prime index graph, Bipartite graph, Girth.
\newline \indent
~~{2010{ \it Mathematics Subject Classification}: 05C25, 05C40, 20E99.}
\newline \indent
~~{\it E-mails}: s\_akbari@sharif.edu, ashtab\_arman@mehr.sharif.ir, f-heydari@kiau.ac.ir,
\newline \indent ~~rezai.mar@gmail.com, fa.sherafati@gmail.com.}
$^{,b}$, M. Rezaee$^{b}$, F. Sherafati$^{b}$\\
{ \footnotesize {\em $^{\rm a}$Department of Mathematical Sciences,
Sharif University of Technology, Tehran, Iran}}\\
{ \footnotesize {\em $^{\rm b}$Department of Mathematics, Karaj Branch,
Islamic Azad University, Karaj, Iran}}}
\date{}
\begin{document}
\maketitle
\begin{abstract}
Let $G$ be a group. The prime index graph of $G$, denoted by $\Pi(G)$, is the graph whose vertex set is the set of all subgroups of $G$ and two distinct comparable vertices $H$ and $K$ are adjacent if and only if the index of $H$ in $K$ or the index of $K$ in $H$ is prime. In this paper, it is shown that for every group $G$, $\Pi(G)$ is bipartite and the girth of $\Pi(G)$ is contained in the set $\{4,\infty\}$. Also we prove that if $G$ is a finite solvable group, then $\Pi(G)$ is connected.
\end{abstract}

\section{Introduction}

\indent Let $\Gamma$ be a graph. We say that $\Gamma$ is {\it connected} if there is a path between any two distinct
vertices of $\Gamma$.
We denote by $d(v)$, the degree of a vertex $v$ in $\Gamma$. A graph in which every vertex has the same degree is called a \textit{regular graph}. If all vertices have degree $k$, then the graph is said to be \textit{$k$-regular}.
The \textit{girth} of $\Gamma$, denoted by $gr(\Gamma)$, is the length of a shortest cycle in $\Gamma$ (We say that $gr(\Gamma)=\infty$ if $\Gamma$ contains no cycle).
A \textit{null graph} is a graph with no edges. A \textit{forest} is a graph with no cycle.
We denote the complete graph, the path and the cycle
of order $n$ by $K_n$, $P_n$ and $C_n$, respectively. We use \textit{$n$-cycle} to denote the cycle of order $n$, where $n \geq 3$. The \textit{Cartesian product} of two graphs $\Gamma$ and $\Omega$ is denoted by $\Gamma\square \Omega$. The \textit{hypercube graph} $Q_s$ is the Cartesian product of $s$ copies of $P_2$.\\
\indent Let $G$ be a group. We denote the identity element of $G$ by $e$. The derived subgroup of $G$ is denoted by $G'$ and $G^{(n+1)}=(G^{(n)})'$, where $n$ is a positive integer. For any subgroup $H$ of $G$, the intersection of all the conjugates of $H$ in $G$ is denoted by $\mathrm{Core}_G(H)$. Let $x\in G$. Then the subgroup generated by $x$ is denoted by $\langle x\rangle$.
As usual, $\mathbb{Z}_n$, $A_n$ and $S_n$ denote the group of integers modulo $n$, the alternating group and the symmetric group of degree $n$,  respectively. For a fixed prime $p$, the \textit{quasicyclic $p$-group} is denoted by $\mathbb{Z}{(p^\infty)}$. Also the projective special linear group of degree $n$ over the field $\mathbb{Z}_p$ is denoted by PSL$(n,p)$.
\\
\indent There are several graphs associated with groups, for instance non-commuting graph of a group, intersection graph of subgroups of a group, and subgroup graph of a group. (See \cite{akbari,hey,erfa}.) The subgroup graph of a group $G$ is defined as the
graph of its lattice of subgroups, that is, the graph whose vertices are the subgroups
of $G$ such that two subgroups $H$ and $K$ are adjacent if one of $H$ or $K$ is maximal in
the other. In this article, we introduce and investigate the \textit{prime index graph} of $G$, denoted by $\Pi(G)$. It is an undirected graph whose vertices are all subgroups of $G$
and two distinct comparable
vertices $H$ and $K$ are adjacent if and only if $[H : K]$ or $[K : H]$ is prime. Clearly, the prime index graph of $G$ is a subgraph of the subgroup graph of $G$ and whenever $G$ is a nilpotent group, see \cite[p.143]{scot}, then these two graphs are coincide.
In follows the prime index graphs of $S_3$ and $A_4$ are given. Note that $H\cong \mathbb{Z}_2\times \mathbb{Z}_2$, $K_i\cong \mathbb{Z}_2$, for $i=1,2,3$ and $L_j\cong \mathbb{Z}_3$, for $j=1,\ldots,4$.
{\rm
\begin{center}
    \begin{tikzpicture}
        \GraphInit[vstyle=Classic]
        \Vertex[x=0,y=0,style={black,minimum size=3pt},LabelOut=true,Lpos=270,L=$\langle(1\ 2)\rangle$]{3}
        \Vertex[x=1.5,y=0,style={black,minimum size=3pt},LabelOut=true,Lpos=270,L=$\langle(1\ 3)\rangle$]{4}
        \Vertex[x=3,y=0,style={black,minimum size=3pt},LabelOut=true,Lpos=270,L=$\langle(2\ 3)\rangle$]{5}
        \Vertex[x=4.5,y=0,style={black,minimum size=3pt},LabelOut=true,Lpos=270,L=$\langle(1\ 2\ 3)\rangle$]{6}
        \Vertex[x=1.5,y=1,style={black,minimum size=3pt},LabelOut=true,Lpos=90,L=$\{e\}$]{1}
        \Vertex[x=3,y=1,style={black,minimum size=3pt},LabelOut=true,Lpos=90,L=$S_3$]{2}
        \Edges(1,3)
        \Edges(1,4)
        \Edges(1,5)
        \Edges(1,6)
        \Edges(2,3)
        \Edges(2,4)
        \Edges(2,5)
        \Edges(2,6)
    \end{tikzpicture}
\hspace{1.1cm}
    \begin{tikzpicture}
        \GraphInit[vstyle=Classic]
        \Vertex[x=0,y=2,style={black,minimum size=3pt},LabelOut=true,Lpos=180,L=$L_1$]{1}
        \Vertex[x=0,y=1.4,style={black,minimum size=3pt},LabelOut=true,Lpos=180,L=$L_2$]{2}
        \Vertex[x=0,y=.7,style={black,minimum size=3pt},LabelOut=true,Lpos=180,L=$L_3$]{3}
        \Vertex[x=0,y=0,style={black,minimum size=3pt},LabelOut=true,Lpos=180,L=$L_4$]{4}
        \Vertex[x=1,y=1,style={black,minimum size=3pt},LabelOut=true,Lpos=270,L=$\{e\}$]{5}
        \Vertex[x=2,y=2,style={black,minimum size=3pt},LabelOut=true,Lpos=90,L=$K_1$]{6}
        \Vertex[x=2,y=1,style={black,minimum size=3pt},LabelOut=true,Lpos=90,L=$K_2$]{7}
        \Vertex[x=2,y=0,style={black,minimum size=3pt},LabelOut=true,Lpos=270,L=$K_3$]{8}
        \Vertex[x=3,y=1,style={black,minimum size=3pt},LabelOut=true,Lpos=270,L=$H$]{9}
        \Vertex[x=4,y=1,style={black,minimum size=3pt},LabelOut=true,Lpos=90,L=$A_4$]{10}
        \Edges(10,9)
        \Edges(9,8)
        \Edges(9,7)
        \Edges(9,6)
        \Edges(8,5)
        \Edges(7,5)
        \Edges(6,5)
        \Edges(5,1)
        \Edges(5,2)
        \Edges(5,3)
        \Edges(5,4)
    \end{tikzpicture}
\end{center}
\hspace{4.2cm}
$\Pi(S_3)$ \hspace{5.9cm}
$\Pi(A_4)$
}\\

Here we show that for every group $G$, $\Pi(G)$ is a bipartite graph and $gr(\Pi(G))\in \{4, \infty\}$. We prove that for any finite abelian group $G$, $\Pi(G)$ is a regular graph if and only if $\Pi(G)$ is a hypercube graph. Finally, we study the connectivity of $\Pi(G)$ and we show that for every finite solvable group $G$, $\Pi(G)$ is a connected graph. Among other results, we prove that if $\Pi(G)$ is a connected graph and $N$ is a normal subgroup of $G$, then both graphs $\Pi(N)$ and $\Pi(G/N)$ are connected.

\section{The Prime Index Graphs are Bipartite}

In this section, we show that the prime index graph of a group $G$ is bipartite. To see this, we prove a stronger result.
First we define a directed graph $\overrightarrow{\Gamma}(G)$.
It is a directed graph whose vertex set is the set of all subgroups of $G$ and for every two distinct vertices $H$ and $K$, there is an arc from $H$
to $K$, whenever $H\subseteq K$ and $[K : H]=r$, for some positive integer $r$. Suppose that $r$ is the weight of the arc from $H$ to $K$.

\begin{thm}
\label{ashtab}
Let $C$ be a cycle of $\overrightarrow{\Gamma}(G)$. Then the product of weights of all clockwise arcs of $C$ is equal to the product of weights of all counter-clockwise arcs of $C$.
\end{thm}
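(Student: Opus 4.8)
The plan is to exhibit an integer-valued ``level'' function on the vertices of the cycle that multiplies the running value by the weight along each arc, and then to telescope it once around the cycle. Write the cycle as $H_0, H_1, \ldots, H_{n-1}, H_0$, each consecutive pair being joined by an arc whose weight is the index between the two (comparable) subgroups, and fix a direction of traversal, say clockwise. I would look for a function $s$ from $\{H_0,\ldots,H_{n-1}\}$ to the positive integers with the property that whenever the arc between two consecutive vertices runs from the smaller subgroup to the larger with weight $w$, the $s$-value of the larger is $w$ times that of the smaller. Granting such an $s$, each clockwise arc multiplies the running value by its weight as we traverse it, while each counter-clockwise arc divides by its weight; since after one full loop $s$ returns to its starting value, the net factor is $1$, which is exactly the identity $\prod_{\mathrm{cw}} w = \prod_{\mathrm{ccw}} w$ asserted by the theorem.

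For a finite group the level function is immediate: take $s(H)=|H|$. For $H\subseteq K$ the tower law gives $[K:H]=|K|/|H|$, so an arc of weight $w=[K:H]$ satisfies $s(K)=w\,s(H)$, and the telescoping finishes the argument with no further work.

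The substantive case is an infinite group, where orders are unavailable, and here I expect the main obstacle to be producing a common ``base'' against which to measure all the $H_i$. My plan is to use $M=\bigcap_{i} H_i$ and set $s(H_i)=[H_i:M]$, which requires showing each $[H_i:M]$ is finite. Finiteness should propagate around the cycle: consecutive vertices are comparable with finite index, and I would argue by induction along the cycle that $[H_i:H_i\cap H_j]<\infty$ for every pair. In the step where the next vertex lies below the previous one I would invoke Poincar\'e's inequality (if $C\subseteq B$ with $[B:C]<\infty$, then $[X:X\cap C]\le[B:C]$ for any $X\le B$), and in the step where it lies above I would use mere monotonicity of intersections. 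Intersecting the finitely many finite-index subgroups $H_i\cap H_j$ of $H_i$ then yields $[H_i:M]<\infty$.

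Once finiteness is established, the tower law $[H_i:M]=[H_i:H_{i+1}]\,[H_{i+1}:M]$ (legitimate because all indices in sight are finite) shows that $s$ is a genuine level function in the required sense, and the telescoping argument of the first paragraph applies verbatim. The only delicate point I anticipate is the propagation lemma for the finiteness of $[H_i:M]$; everything after that reduces to the same clean multiplicative telescoping as in the finite case.
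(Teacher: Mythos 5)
Your proposal is correct, but it takes a genuinely different route from the paper's. The paper proceeds by induction on the length $n$ of the cycle, performing surgery on $C$: a directed subpath $H\xrightarrow{r}K\xrightarrow{s}L$ is collapsed to a single arc $H\xrightarrow{rs}L$; failing that, a wedge $H\xrightarrow{r}K\xleftarrow{s}L$ is replaced by $H\xleftarrow{s'}H\cap L\xrightarrow{r'}L$ using the identity $rs'=sr'$ (both sides equal $[K:H\cap L]$), with a separate case --- splitting $C$ into two shorter cycles --- when $H\cap L$ already happens to be a vertex of $C$. Your argument instead builds a global potential function: with $M=\bigcap_i H_i$ and $s(H_i)=[H_i:M]$, the theorem falls out by telescoping the ratios $s(H_{i+1})/s(H_i)$ once around the cycle, since each clockwise arc contributes its weight and each counter-clockwise arc the reciprocal. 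The one substantive point is finiteness of the indices $[H_i:M]$, and your propagation scheme for it is sound: moving along an arc ``upward'' finiteness of $[H_i:H_i\cap H_j]$ is preserved by monotonicity of intersections, moving ``downward'' it is preserved by Poincar\'e's inequality $[X:X\cap C]\le[B:C]$, and $M$, being a finite intersection of finite-index subgroups of $H_i$, then has finite index in $H_i$; after that the tower law makes $s$ a genuine level function. What your approach buys is the complete elimination of the induction and its case analysis (including the unexamined base case $n=3$ in the paper), reducing everything to the tower law plus Poincar\'e's lemma, and it makes the mechanism transparent: the weight function is the ``gradient'' of a potential, so its signed product around any cycle is trivial. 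What the paper's approach buys is locality: it only ever intersects two subgroups lying under a common vertex of $C$, where finiteness of the relevant indices is immediate from Poincar\'e applied inside that one vertex, and so it never needs the global object $M$; the price is the two-case surgery and the attendant bookkeeping of clockwise and counter-clockwise weights.
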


\begin{proof}
{Let $C$ be a cycle of $\overrightarrow{\Gamma}(G)$ of length $n$. We prove the theorem by induction on $n$. Clearly, for $n = 3$ the assertion holds. Now, suppose that $n>3$ and the assertion is true for every integer $m$, $3\leq m<n$. If $C$ contains a directed path $P$ of length 2, such as $H\xrightarrow{r} K\xrightarrow{s} L$, then we replace $P$ with the path $H\xrightarrow{rs} L$. Hence by the induction hypothesis the result holds. Otherwise, $C$ contains a path of the form $H\xrightarrow{r} K\xleftarrow{s} L\xrightarrow{t} M$. We consider two cases:

{\bf Case 1}. If $H\cap L$ is not a vertex of $C$, then
we replace $H\xrightarrow{r} K\xleftarrow{s} L$ with the path $H\xleftarrow{s'} H\cap L\xrightarrow{r'} L$, where $[L : H\cap L]=r'$ and $[H : H\cap L]=s'$. Note that $s'r=r's$ and so $r/s=r'/s'$.
Next, we replace $H\cap L\xrightarrow{r'} L\xrightarrow{t} M$ with $H\cap L\xrightarrow{r't} M$. Thus we find a cycle $C_1$ of length $n-1$ and by the induction hypothesis $r'ta=s'b$, where $a$ is the product of weights of all clockwise arcs of $C_1$ except the weight of $H\cap L\xrightarrow{r't} M$ and $b$ is the product of weights of all counter-clockwise arcs of $C_1$ except the weight of $H\xleftarrow{s'} H\cap L$. Hence $r/s=r'/s'=b/ta$ and so $rta=sb$. It is clear that $rta$ is the product of weights of all clockwise arcs of $C$ and $sb$ is the product of weights of all counter-clockwise arcs of $C$. The result holds.

{\bf Case 2}. Assume that $H\cap L$ is a vertex of $C$. Clearly, $H\cap L\neq H$ or $H\cap L\neq L$. With no loss of generality, suppose that
$H\cap L\neq H$. By adding the arc $H\cap L \xrightarrow{} H$, we find two cycles $C_1$ and $C_2$ of lengths less than $n$. Let $[H : H\cap L]=s'$. Assume that the arc $H\cap L \xrightarrow{s'} H$ is a clockwise arc of $C_1$. So $H\cap L \xrightarrow{s'} H$ is a counter-clockwise arc of $C_2$. Now, by the induction hypothesis, $s'=b_1/a_1=a_2/b_2$, where $a_1$ is the product of weights of all clockwise arcs of $C_1$ except the weight of $H\cap L \xrightarrow{s'} H$, $a_2$ is the product of weights of all clockwise arcs of $C_2$, $b_1$ is the product of weights of all counter-clockwise arcs of $C_1$, and $b_2$ is the product of weights of all counter-clockwise arcs of $C_2$ except the weight of $H\cap L \xrightarrow{s'} H$. Thus $a_1a_2=b_1b_2$. The proof is complete.
}
\end{proof}

Now, we are in a position to prove the following corollary.

\begin{cor}
\label{bipartite}
Let $G$ be a group. Then $\Pi(G)$ is bipartite.
\end{cor}

\begin{proof}
{We show that every cycle of $\Pi(G)$ is an even cycle. If $\Pi(G)$ has a cycle $C$, we may assume that $C$ is a cycle in $\overrightarrow{\Gamma}(G)$. Now, by Theorem \ref{ashtab}, since all weights are primes, the number of clockwise arcs of $C$ is equal to the number of counter-clockwise arcs of $C$. Hence $C$ is an even cycle. This implies that $\Pi(G)$ is a bipartite graph.
}
\end{proof}

If $G$ is a non-trivial group and $e\neq x\in G$, then $\langle x \rangle$ contains a subgroup of prime index and hence $d(\langle x \rangle)\geq 1$. So $\Pi(G)$ is not a null graph.

\begin{lem}
Let $G$ be a group. Then $\Pi(G)$ is a complete bipartite graph if and only if $G$ is a cyclic group of prime order or $|G|=pq$, for some primes $p$ and $q$.
\end{lem}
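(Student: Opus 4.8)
The plan is to prove both implications by reading off the neighbourhoods of the two distinguished vertices $\{e\}$ and $G$ in the graph $\Pi(G)$, which is bipartite by Corollary~\ref{bipartite}. The pivotal observation I would isolate first is purely graph-theoretic: in a complete bipartite graph, all vertices of one part share the same neighbourhood (namely the whole other part). Everything then follows from the fact that $N(\{e\})$ is exactly the set of prime-order subgroups, since $[H:\{e\}]=|H|$, while $N(G)$ is exactly the set of prime-index subgroups.

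For the \emph{if} direction I would just exhibit the structure directly. If $|G|$ is prime, then $\{e\}$ and $G$ are the only subgroups and $\Pi(G)=K_2=K_{1,1}$. If $|G|=pq$ for primes $p,q$ (allowing $p=q$), Lagrange forces every proper nontrivial subgroup to have order $p$ or $q$; I would take one part to be $\{\{e\},G\}$ and the other to be the set of these ``middle'' subgroups. A short index computation settles all adjacencies: $\{e\}$ has prime index into each middle subgroup (index $=|M|\in\{p,q\}$) and $G$ has prime index over each (index $=pq/|M|\in\{q,p\}$), while $\{e\}$ and $G$ are non-adjacent (index $pq$ composite) and any two distinct middle subgroups are incomparable, since neither of their prime orders properly divides the other, hence non-adjacent. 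Cauchy's theorem supplies at least one middle subgroup, so this is genuinely $K_{2,m}$ with $m\geq 1$.

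For the \emph{only if} direction, assume $\Pi(G)$ is complete bipartite; as $G$ is nontrivial the graph carries an edge, so both parts are nonempty and no vertex is isolated. I would split on whether $\{e\}$ and $G$ are adjacent. If they are, then $|G|=[G:\{e\}]$ is prime and $G$ is cyclic of prime order. If they are not, they lie in a common part $A$, so by the opening observation $N(\{e\})=N(G)=B$, the other part. Picking any $M\in B$ (nonempty since $\{e\}$ is not isolated), membership in $N(\{e\})$ gives $|M|$ prime and membership in $N(G)$ gives $[G:M]$ prime, whence $|G|=|M|\cdot[G:M]=pq$.

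The main subtlety to handle carefully is that no finiteness should be assumed in advance: it is forced, because the identity $|G|=|M|\cdot[G:M]$ exhibits $|G|$ as a product of two primes, and the non-null hypothesis guarantees $N(\{e\})=B\neq\varnothing$, ruling out torsion-free or infinite pathologies (such as $\mathbb{Z}$) in which $\{e\}$ would have been isolated. I would also dispose at the outset of the degenerate reading where $G$ is trivial, which the discussion preceding the lemma has already excluded by noting that $\Pi(G)$ is non-null for every nontrivial $G$.
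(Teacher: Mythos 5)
Your proposal is correct and follows essentially the same route as the paper: both directions hinge on whether $\{e\}$ and $G$ lie in the same part of the bipartition, with a common neighbour $M$ yielding $|G|=|M|\cdot[G:M]=pq$ in the non-adjacent case. You simply spell out the details the paper leaves implicit (the full adjacency check for $|G|=pq$, non-emptiness of the parts via the non-null observation, and the automatic finiteness of $G$), which is fine but not a different argument.
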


\begin{proof}
{Clearly, if $G\cong \mathbb{Z}_p$, then $\Pi(G)\cong K_2$. Also if $|G|=pq$, then $\Pi(G)$ is a complete bipartite graph whose one part contains all subgroups of $G$ of orders $p$ or $q$ and the other part contains $\{e\}$ and $G$. Conversely, assume that $\Pi(G)$ is complete bipartite. If $\{e\}$ and $G$ are contained in two different parts of $\Pi(G)$, then $G\cong \mathbb{Z}_p$, where $p$ is a prime number. Otherwise, there exists a subgroup $H$ of $G$ adjacent to both $\{e\}$ and $G$. Thus $|G|=pq$, for some primes $p$ and $q$.
}
\end{proof}

The following theorem shows that if $\Pi(G)$ contains a cycle $C$, then $gr(\Pi(G))=4$.

\begin{thm}
\label{girth}
Let $G$ be a group. Then $gr(\Pi(G))\in \{4, \infty\}$.
\end{thm}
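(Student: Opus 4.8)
The plan is to use that, by Corollary \ref{bipartite}, $\Pi(G)$ is bipartite, so it contains no odd cycle and hence $gr(\Pi(G))$ is either infinite or an even number that is at least $4$. Thus it suffices to show that \emph{if $\Pi(G)$ has any cycle, then it has a cycle of length $4$}: this forces $gr(\Pi(G))=4$, while the forest case gives $gr(\Pi(G))=\infty$. So I would fix a shortest cycle $C$, of length $2n$, and argue by contradiction that $n=2$; that is, I assume $n\ge 3$ and manufacture a $4$-cycle, which is strictly shorter than $C$ and contradicts minimality. (A $4$-cycle on four genuinely distinct vertices already contradicts $n\ge 3$, so I do not even need the chordlessness of $C$.)

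The main device is a ``diamond'' at the top of the cycle. Since $C$ is finite, it has a vertex $K$ of maximum order; its two neighbours $H,L$ in $C$ are comparable to $K$ and of strictly smaller order, so $H,L\subsetneq K$ with $[K:H]=p$ and $[K:L]=q$ prime and $H\neq L$. Put $D:=H\cap L$; as $H,L$ are maximal in $K$ and distinct, $D\subsetneq H$ and $D\subsetneq L$. The map $h(H\cap L)\mapsto hL$ embeds the coset space $H/D$ into $K/L$, giving $[H:D]\le q$, and dually $[L:D]\le p$, while the tower identity yields $p\,[H:D]=q\,[L:D]=[K:D]$. If $p\neq q$, then coprimality forces $p\mid[L:D]$, hence $[L:D]=p$ and $[H:D]=q$, both prime; and if $p=q$ with $[H:D]$ prime the same conclusion holds. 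In either case $D$ is a neighbour of both $H$ and $L$ and is different from $K$, so $H-K-L-D-H$ is a $4$-cycle, the desired contradiction.

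The one case this leaves open is $p=q$ with $[H:D]$ composite; equivalently, $H$ and $L$ do not permute, since $|HL|=[H:D]\,|H|<|K|$. I expect this to be the main obstacle, and it is a genuine one: in $F_{20}$ the two cyclic subgroups of order $4$ lying under $F_{20}$ have only $F_{20}$ as a common neighbour, so the path through $F_{20}$ lies in no $4$-cycle, and the argument must exploit that $K$ has maximum order in $C$, not merely that it is a local maximum. To get a foothold I would first note that $p$ cannot be the least prime divisor of $|K|$: otherwise $H\trianglelefteq K$, whence $HL=\langle H,L\rangle=K$ and $[H:D]=[HL:L]=[K:L]=q=p$ is prime, contrary to assumption. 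Hence $|K|$ has a prime divisor smaller than $p$ and neither $H$ nor $L$ is normal in $K$.

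Building on this, the plan is to reduce to a \emph{permutable} configuration, where everything works: if subgroups $A,B$ of a group satisfy $AB=BA$, then $AB$ is a subgroup and $|AB|=|A|\,|B|/|A\cap B|$, so from $A\cap B$ adjacent below $A$ and $B$ one obtains $AB$ adjacent above both, closing a $4$-cycle. Concretely I would run the dual diamond at a vertex $D^{\ast}$ of minimum order in $C$, whose two neighbours $H^{\ast},L^{\ast}$ satisfy $H^{\ast}\cap L^{\ast}=D^{\ast}$; when $H^{\ast},L^{\ast}$ permute, $H^{\ast}L^{\ast}$ supplies the top of a $4$-cycle $D^{\ast}-H^{\ast}-H^{\ast}L^{\ast}-L^{\ast}-D^{\ast}$. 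The crux of the whole proof is then to show that a shortest cycle cannot avoid a permuting (or unequal-index) extremal pair at \emph{both} its top and its bottom; I would attack this through the normality information just obtained, passing to $\mathrm{Core}_K(H)$ and the induced degree-$p$ permutation action of $K$ on $K/H$, to force the existence of a permuting pair and hence a $4$-cycle.
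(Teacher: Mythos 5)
Your reduction via bipartiteness and your ``diamond'' computation are correct as far as they go: with $K$ extremal in the cycle and $H,L\subsetneq K$ of prime indices $p,q$, the cycle $H$--$K$--$L$--$(H\cap L)$--$H$ closes whenever $p\neq q$, or $p=q$ and $[H:H\cap L]$ is prime. But the proposal stops exactly at the case you yourself flag as the crux ($p=q$ with $[H:H\cap L]$ composite at the top, together with a non-permuting pair at the bottom), and what you offer for it is a plan, not a proof: no argument is given that a shortest cycle of length at least $6$ must carry a closing (permuting or unequal-index) pair at one of its two extremes. Your $F_{20}$ example in fact cuts against the hope expressed at the end: the path $H$--$F_{20}$--$L$ through two Sylow $2$-subgroups extends to a $6$-cycle $H$--$F_{20}$--$L$--$L'$--$D$--$H'$--$H$, where $H',L'$ are the involution subgroups of $H,L$ and $D$ is the dihedral subgroup of order $10$ (which contains every involution of $F_{20}$); in this cycle $F_{20}$ has \emph{globally} maximum order and the diamond at it still fails. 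So global maximality is not the missing ingredient; minimality of the cycle would have to be used in an essential way, and the proposal contains no mechanism for that. Moreover, the core reduction you gesture at leads out of your framework: $K/\mathrm{Core}_K(H)$ comes equipped with a path $H$--$K$--$xHx^{-1}$, not with a cycle, so a shortest-cycle induction has nothing to grab onto there; what is really needed is the unconditional statement that \emph{every} finite group other than a cyclic $p$-group has girth $4$. (Two smaller points: for infinite $G$ the phrase ``vertex of maximum order'' is undefined, so you must take maximal and minimal elements of the poset of cycle vertices; and ``$[H:H\cap L]$ composite iff $H,L$ do not permute'' is not an equivalence --- in $\mathrm{PSL}(2,7)$ a point-stabilizer and an incident line-stabilizer, both of index $7$ and isomorphic to $S_4$, meet in a $D_4$ of prime index $3$ yet do not permute --- though this slip is harmless to your case analysis.)

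The paper proves precisely that unconditional finite statement, and it does so by choosing \emph{where} to build the diamond so that permutability comes for free, rather than hunting for it inside a given cycle. If some Sylow $p_i$-subgroup $L_i$ of the finite group $G$ is non-cyclic, it has two distinct maximal subgroups, and maximal subgroups of a finite $p$-group are \emph{normal}; hence they permute and $L_i$--$H$--$H\cap K$--$K$--$L_i$ is a $4$-cycle. If instead every Sylow subgroup is cyclic, then $G$ is supersolvable, so for $s\geq 2$ it has a subgroup $K$ of order $p_1p_2$ and $\{e\}$--$H_1$--$K$--$H_2$--$\{e\}$ is a $4$-cycle, while $s=1$ gives $G\cong\mathbb{Z}_{p^n}$ and a path. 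The infinite case is then reduced to the finite one by exactly the core trick: a cycle yields $M$--$H$--$N$ with $M,N$ maximal in $H$; if both are normal in $H$ the diamond closes, and otherwise $H/\mathrm{Core}_H(M)$ is a finite non-cyclic group, so the finite statement applies and its $4$-cycle lifts through the correspondence theorem. The idea missing from your proposal is thus to manufacture permutability from normality (maximal subgroups of $p$-groups, or core quotients), which requires abandoning the shortest-cycle framework in favour of a direct construction; as written, the proposal does not prove the theorem.
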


\begin{proof}
{First assume that $G$ is finite and $|G|=p^{n_1}_1\cdots p^{n_s}_s$, where $p_1,\ldots,p_s$ are distinct primes and $n_1,\ldots,n_s$ are positive integers. Suppose that $L_i$ is a Sylow $p_i$-subgroup of $G$, for $i=1,\ldots,s$. First assume that $L_i$ contains two distinct maximal subgroups $H$ and $K$, for some $i$. Since $H$ and $K$ are normal subgroups of $L_i$, so $HK=L_i$. This implies that $|H\cap K|=p^{n_i-2}_i$ and hence $L_i$ -- $H$ -- $H\cap K$ -- $K$ -- $L_i$ is a 4-cycle in $\Pi(G)$. So by Corollary \ref{bipartite}, $gr(\Pi(G))=4$. Next, assume that $L_i$ contains a unique maximal subgroup, for $i=1,\ldots,s$. Hence all Sylow subgroups of $G$ are cyclic.  Now, by \cite[Theorem 10.26]{rose}, $G$ is a supersolvable group. If $s\geq 2$, then $G$ has a subgroup $K$ of order $p_1p_2$ (\cite[p.292]{rose}). Let $H_i$ be a subgroup of $K$ of order $p_i$, for $i=1,2$. Hence $\{e\}$ -- $H_1$ -- $K$ -- $H_2$ -- $\{e\}$ is a 4-cycle in $\Pi(G)$ and so by Corollary \ref{bipartite}, $gr(\Pi(G))=4$. If $s=1$, then $G\cong \mathbb{Z}_{p^{n_1}_1}$. Thus $\Pi(G)\cong P_{n_1+1}$ and $gr(\Pi(G))=\infty$.\\
\indent Now, suppose that $G$ is infinite and $\Pi(G)$ contains a cycle $C$. It is easy to see that $C$ should contain a path of the form $M$ -- $H$ -- $N$, where $H$, $M$ and $N$ are subgroups of $G$ and furthermore $M$ and $N$ are maximal subgroups of $H$. If both $M$ and $N$ are normal subgroups of $H$, then $[M : M\cap N]=[MN : N]=[H : N]$ and similarly $[N : M\cap N]=[H : M]$. Thus $H$ -- $M$ -- $M\cap N$ -- $N$ -- $H$ is a 4-cycle in $\Pi(G)$. Now, assume that $M$ is not a normal subgroup of $H$.
Then $M$ -- $H$ -- $xMx^{-1}$ is a path in $\Pi(G)$, for some $x\in G$. Therefore, $M/\mathrm{Core}_H(M)$ -- $H/\mathrm{Core}_H(M)$ -- $xMx^{-1}/\mathrm{Core}_H(M)$ is a path in $\Pi(H/\mathrm{Core}_H(M))$. Clearly, $H/\mathrm{Core}_H(M)$ is a finite group which is not a cyclic $p$-group. So by the previous paragraph, $gr(\Pi(H/\mathrm{Core}_H(M)))=4$ and hence $gr(\Pi(G))=4$.
}
\end{proof}

By the proof of the previous theorem, we have the following corollary.

\begin{cor}
If $G$ is a finite group or an infinite abelian group, then $\Pi(G)$ is a forest if and only if $G$ is isomorphic to either $\mathbb{Z}_{p^n}$ or $\mathbb{Z}{(p^\infty)}$, where $p$ is a prime and $n$ is a positive integer.
\end{cor}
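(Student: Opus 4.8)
The plan is to reduce everything to Theorem \ref{girth}. Since a forest is exactly a graph with no cycle, and $gr(\Pi(G))\in\{4,\infty\}$ by Theorem \ref{girth}, the graph $\Pi(G)$ is a forest if and only if $gr(\Pi(G))=\infty$, equivalently $\Pi(G)$ contains no $4$-cycle. So the entire task is to decide which groups, among the finite and the infinite abelian ones, avoid a $4$-cycle. For the easy direction I would first record that the listed groups do give forests: if $G\cong\mathbb{Z}_{p^n}$ the subgroups form the chain $\{e\}\subsetneq\mathbb{Z}_p\subsetneq\cdots\subsetneq\mathbb{Z}_{p^n}$ with consecutive indices $p$, so $\Pi(G)\cong P_{n+1}$, exactly as in the proof of Theorem \ref{girth}; and for $\mathbb{Z}(p^\infty)$ the subgroups form the chain $\{e\}\subsetneq C_p\subsetneq C_{p^2}\subsetneq\cdots$, giving an infinite ray. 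Both are acyclic.

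For the forward direction I would split into the two cases. When $G$ is finite, the proof of Theorem \ref{girth} already shows that $gr(\Pi(G))=\infty$ forces $s=1$ together with all Sylow subgroups cyclic, that is, $G\cong\mathbb{Z}_{p^n}$, so nothing new is required. The substance lies in the infinite abelian case, and here I would extract two necessary conditions from the absence of a $4$-cycle, using precisely the $4$-cycle constructions from the proof of Theorem \ref{girth} (legitimate because in an abelian group every subgroup is normal). First, if some subgroup $H$ had two distinct subgroups of prime index, or two distinct subgroups in which $H$ itself has prime index, a $4$-cycle would appear. Taking $H=\{e\}$ then shows $G$ has a unique subgroup of prime order; and taking $H=\langle x\rangle\cong\mathbb{Z}$ for any element $x$ of infinite order would produce the $4$-cycle through $\langle x^2\rangle$ and $\langle x^3\rangle$, so no such $x$ exists. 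Hence $G$ is a torsion group with a unique minimal subgroup.

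From this I would argue that $G$ is a $p$-group: a torsion abelian group is the direct sum of its primary components, and two nontrivial components would supply subgroups of two different prime orders, contradicting uniqueness of the minimal subgroup. Thus $G$ is an infinite abelian $p$-group with a unique subgroup of order $p$, and the goal becomes to conclude $G\cong\mathbb{Z}(p^\infty)$. I would establish this by showing that every finite subgroup is cyclic (a finite abelian $p$-group with a unique subgroup of order $p$ is cyclic), deducing that the finite subgroups are totally ordered by inclusion (any two lie in a common finite cyclic subgroup, whose subgroup lattice is a chain), and then noting that an infinite group which is the union of a strictly increasing chain of cyclic $p$-groups must be isomorphic to $\mathbb{Z}(p^\infty)$; alternatively one simply cites the classification of abelian $p$-groups with a unique minimal subgroup. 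This last structural step is the main obstacle, being the one part that is not a formal consequence of Theorem \ref{girth}: the graph-theoretic reductions preceding it are routine, but pinning down $\mathbb{Z}(p^\infty)$ requires the group-theoretic classification.
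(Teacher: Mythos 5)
Your proposal is correct and follows essentially the same route as the paper: both directions reduce to the $4$-cycle constructions from the proof of Theorem~\ref{girth}, the finite case is handled identically, and the infinite abelian case proceeds by showing $G$ is torsion, is a $p$-group, and has all finite subgroups cyclic. The only divergence is the final structural step: the paper deduces that $G$ has no non-trivial direct summand and then cites \cite[p.110]{rob} to conclude $G\cong \mathbb{Z}(p^\infty)$, whereas you argue directly that the finite subgroups form an ascending chain of cyclic $p$-groups whose union must be $\mathbb{Z}(p^\infty)$ --- a slightly more self-contained finish that avoids invoking the classification of indecomposable torsion abelian groups. One cosmetic correction: $\Pi(\mathbb{Z}(p^\infty))$ is not just an infinite ray, since the vertex $\mathbb{Z}(p^\infty)$ itself is isolated (every proper subgroup has infinite index); the graph is a disjoint union of an isolated vertex and an infinite path, which is of course still a forest, so your conclusion stands.
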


\begin{proof}
{Suppose that $\Pi(G)$ is a forest. If $G$ is finite, then by the proof of Theorem \ref{girth}, $G\cong \mathbb{Z}_{p^n}$, for some prime number $p$ and positive integer $n$.
If $G$ is an infinite abelian group, then $G$ is a torsion $p$-group. (Note that $gr(\Pi(\mathbb{Z}))=4$ and if $G$ has two elements of orders $p$ and $q$, then $\mathbb{Z}_{pq}$ is a subgroup of $G$, where $p,q$ are distinct primes.) Also by the proof of Theorem \ref{girth}, every finite subgroup of $G$ is cyclic. Thus $G$ has no non-trivial direct summand. Now, by \cite[p.110]{rob}, $G\cong \mathbb{Z}{(p^\infty)}$, for some prime $p$. Clearly, $\Pi(\mathbb{Z}{(p^\infty)})$ is a disjoint union of an isolated vertex and an infinite path. The proof is complete.
}
\end{proof}

In the following theorem, we consider the prime index graph of cyclic groups.

\begin{thm}
\label{cyclic}
Let $n=p^{n_1}_1\cdots p^{n_s}_s$, where $p_1,\ldots,p_s$ are distinct primes and $n_1,\ldots,n_s$ are positive integers. Then $\Pi(\mathbb{Z}_n)\cong P_{n_1+1}\square \cdots \square P_{n_s+1}$.
\end{thm}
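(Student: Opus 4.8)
The plan is to set up an explicit bijection between the subgroups of $\mathbb{Z}_n$ and the lattice points of the box $\prod_{i=1}^{s}\{0,1,\ldots,n_i\}$, and then to check that this bijection carries the adjacency relation of $\Pi(\mathbb{Z}_n)$ onto exactly the adjacency relation of the Cartesian product $P_{n_1+1}\square\cdots\square P_{n_s+1}$.

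First I would recall that since $\mathbb{Z}_n$ is cyclic, for every divisor $d$ of $n$ there is a unique subgroup $H_d$ of order $d$, and these exhaust all subgroups; thus the subgroups of $\mathbb{Z}_n$ are in bijection with the divisors of $n$. Writing each divisor uniquely as $p_1^{a_1}\cdots p_s^{a_s}$ with $0\le a_i\le n_i$, I obtain a bijection
\[
\phi:\{\text{subgroups of }\mathbb{Z}_n\}\longrightarrow\prod_{i=1}^{s}\{0,1,\ldots,n_i\},
\qquad H_d\mapsto(a_1,\ldots,a_s),
\]
where $H_d$ is the unique subgroup of order $d=p_1^{a_1}\cdots p_s^{a_s}$.

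Next I would translate the adjacency. For two subgroups $H_d$ and $H_e$ with $\phi(H_d)=(a_1,\ldots,a_s)$ and $\phi(H_e)=(b_1,\ldots,b_s)$, containment $H_d\subseteq H_e$ holds if and only if $d\mid e$, that is, $a_i\le b_i$ for all $i$; in that case $[H_e:H_d]=e/d=\prod_i p_i^{b_i-a_i}$. Since comparable subgroups of $\mathbb{Z}_n$ are totally ordered by divisibility, $H_d$ and $H_e$ are adjacent in $\Pi(\mathbb{Z}_n)$ precisely when one tuple dominates the other and the corresponding index is prime, which occurs exactly when the two tuples agree in every coordinate but one and differ by $1$ in that coordinate.

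Finally I would recall that, labelling the vertices of $P_{n_i+1}$ by $0,1,\ldots,n_i$ so that two labels are adjacent if and only if they differ by $1$, the definition of the Cartesian product makes $(a_1,\ldots,a_s)$ and $(b_1,\ldots,b_s)$ adjacent in $P_{n_1+1}\square\cdots\square P_{n_s+1}$ precisely when they agree in all coordinates but one and differ by $1$ there. This is the same relation obtained above, so $\phi$ is a graph isomorphism and the claim follows. The only point needing care is the translation of the prime-index condition into the ``differ in one coordinate by one'' condition, in particular keeping the order-versus-index bookkeeping straight; once that is settled there is no genuine obstacle.
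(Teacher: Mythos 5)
Your proof is correct and follows essentially the same route as the paper's: both parametrize the subgroups of $\mathbb{Z}_n$ by exponent tuples $(\alpha_1,\ldots,\alpha_s)$ with $0\le\alpha_i\le n_i$ (you via the divisor lattice, the paper via the decomposition $\mathbb{Z}_n\cong\mathbb{Z}_{p_1^{n_1}}\times\cdots\times\mathbb{Z}_{p_s^{n_s}}$) and then observe that prime index corresponds exactly to a unit step in a single coordinate, which is the adjacency rule of $P_{n_1+1}\square\cdots\square P_{n_s+1}$. If anything, your use of the uniqueness of the subgroup of each order in a cyclic group makes the bijection slightly more explicit than the paper's isomorphism-type bookkeeping.
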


\begin{proof}
{We know that $\mathbb{Z}_n\cong \mathbb{Z}_{p^{n_1}_1}\times \cdots \times \mathbb{Z}_{p^{n_s}_s}$. If $H$ and $K$ are two distinct subgroups of $\mathbb{Z}_n$, then $H\cong \mathbb{Z}_{p^{\alpha_1}_1}\times \cdots \times \mathbb{Z}_{p^{\alpha_s}_s}$ and $K\cong \mathbb{Z}_{p^{\beta_1}_1}\times \cdots \times \mathbb{Z}_{p^{\beta_s}_s}$, where $0\leq \alpha_i, \beta_i\leq n_i$ for $i=1,\ldots,s$. So $H$ and $K$ are adjacent if and only if there exists an integer $j$, $1\leq j\leq s$, such that $\alpha_i=\beta_i$ for $i\neq j$ and $\alpha_j=\beta_j\pm 1$. Thus $\Pi(\mathbb{Z}_n)\cong \Pi(\mathbb{Z}_{p^{n_1}_1})\square \cdots \square \Pi(\mathbb{Z}_{p^{n_s}_s})$ and $\Pi(\mathbb{Z}_n)\cong P_{n_1+1}\square \cdots \square P_{n_s+1}$.
}
\end{proof}

\begin{thm}
\label{regular}
Let $G$ be a finite abelian group. If $\Pi(G)$ is regular, then $G\cong \mathbb{Z}_{p_1\cdots p_s}$ and $\Pi(G)\cong Q_s$, where $p_1,\ldots,p_s$ are distinct prime numbers.
\end{thm}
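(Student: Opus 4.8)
The plan is to reduce to the case of abelian $p$-groups via the Sylow decomposition, and then to determine exactly which $p$-groups yield a regular prime index graph by a direct degree count. First I would write $G \cong P_1 \times \cdots \times P_s$, where $P_i$ is the Sylow $p_i$-subgroup of $G$. Exactly as in the proof of Theorem \ref{cyclic}, every subgroup $H$ of $G$ factors as $H = H_1 \times \cdots \times H_s$ with $H_i \leq P_i$, and for $H \subseteq K$ one has $[K:H] = \prod_i [K_i : H_i]$; this index is prime precisely when the factors agree in all coordinates but one, say $j$, where $[K_j : H_j] = p_j$. Consequently $\Pi(G) \cong \Pi(P_1) \square \cdots \square \Pi(P_s)$, the natural extension of Theorem \ref{cyclic} from cyclic groups to arbitrary finite abelian groups.

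Next I would record the elementary fact that a Cartesian product of graphs is regular if and only if each factor is regular: the degree of $(v_1,\dots,v_s)$ equals $\sum_i d_{\Pi(P_i)}(v_i)$, so fixing all coordinates but one forces each $d_{\Pi(P_i)}$ to be constant, and conversely the sum of constant degrees is constant. Hence $\Pi(G)$ is regular if and only if every $\Pi(P_i)$ is regular, and the whole problem reduces to the claim that, for a nontrivial finite abelian $p$-group $P$, the graph $\Pi(P)$ is regular if and only if $P \cong \mathbb{Z}_p$.

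For this $p$-group step I would use that in a $p$-group a subgroup of prime index is exactly a maximal subgroup, so $\Pi(P)$ is the undirected Hasse diagram of the subgroup lattice, graded by order. Let $k$ denote the rank of $P$. Then $d(\{e\})$ equals the number of subgroups of order $p$, namely $\frac{p^k-1}{p-1}$ (the lines in the socle $P[p]$), and $d(P)$ equals the number of maximal subgroups, again $\frac{p^k-1}{p-1}$ (the hyperplanes in $P/pP$); so if $\Pi(P)$ is $c$-regular then necessarily $c = \frac{p^k-1}{p-1}$. If $k=1$ then $P \cong \mathbb{Z}_{p^n}$ and $\Pi(P) \cong P_{n+1}$, which is regular only for $n=1$, giving $P \cong \mathbb{Z}_p$. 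If $k \geq 2$, choose a maximal subgroup $M$, say of rank $r$; its neighbors in $\Pi(P)$ are $P$ together with the maximal subgroups of $M$, so $d(M) = 1 + \frac{p^r-1}{p-1}$. Regularity then forces $1 + \frac{p^r-1}{p-1} = \frac{p^k-1}{p-1}$, that is $p^r = p^k - p + 1$; but for $k \geq 2$ the right-hand side exceeds $1$ and is congruent to $1 \pmod p$, hence is not a power of $p$, a contradiction. Assembling the pieces, each $P_i \cong \mathbb{Z}_{p_i}$, so $G \cong \mathbb{Z}_{p_1}\times\cdots\times\mathbb{Z}_{p_s} \cong \mathbb{Z}_{p_1\cdots p_s}$, and $\Pi(G) \cong \Pi(\mathbb{Z}_{p_1})\square\cdots\square\Pi(\mathbb{Z}_{p_s}) \cong P_2 \square \cdots \square P_2 = Q_s$.

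The crux I expect is the $p$-group analysis, and in particular the comparison of $d(M)$ against the forced degree $\frac{p^k-1}{p-1}$: the arithmetic observation that $p^k - p + 1$ is never a power of $p$ when $k \geq 2$ is precisely what eliminates every non-cyclic abelian $p$-group in a single stroke. By contrast, the Sylow decomposition of $\Pi(G)$ and the regularity criterion for Cartesian products are routine bookkeeping, and the cyclic subcase is immediate from the structure of paths.
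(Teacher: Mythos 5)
Your proof is correct, and it reaches the conclusion by a more modular route than the paper's, although the two arguments share the same engine: a degree count comparing two vertices, powered by the fact that a rank-$k$ abelian $p$-group has exactly $(p^k-1)/(p-1)$ subgroups of order $p$, and finished by a mod-$p$ arithmetic contradiction. The paper works directly inside $G$: it writes $G$ as a product of cyclic primary factors $\mathbb{Z}_{p_i^{\alpha_{i1}}}\times\cdots\times\mathbb{Z}_{p_s^{\alpha_{sk_s}}}$, compares $d(\{e\})=\sum_j n(k_j,p_j)$ with the degree of the single cyclic direct factor $\mathbb{Z}_{p_i^{\alpha_{i1}}}$, namely $1+n(k_i-1,p_i)+\sum_{j\neq i}n(k_j,p_j)$ (the cross-prime terms cancel), and deduces $p_i^{k_i-1}=1$, so every $k_i=1$ and $G$ is cyclic; it then invokes Theorem \ref{cyclic} to see $\Pi(G)$ as a product of paths and forces all $\alpha_i=1$ from regularity. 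You instead factor the graph first, proving $\Pi(G)\cong\Pi(P_1)\square\cdots\square\Pi(P_s)$ for the Sylow decomposition (an extension of Theorem \ref{cyclic} to all finite abelian groups that the paper never states), add the observation that a Cartesian product is regular iff each factor is, and thereby reduce everything to the claim that a nontrivial abelian $p$-group $P$ with $\Pi(P)$ regular is $\mathbb{Z}_p$; your comparison there is $d(\{e\})$ against $d(M)$ for a maximal subgroup $M$, giving the impossible equation $p^r=p^k-p+1$. What your route buys is reusability and cleanliness: the Sylow factorization of $\Pi(G)$ and the regularity-transfer lemma are structural facts of independent interest, and the final isomorphism $\Pi(G)\cong Q_s$ falls out of the factorization without a separate appeal to Theorem \ref{cyclic}. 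What the paper's route buys is economy: it never needs the product factorization beyond the cyclic case it has already proved, at the price of carrying the cross terms $\sum_{j\neq i}n(k_j,p_j)$ through the degree computation. Both arguments are complete; yours is a legitimate alternative proof.
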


\begin{proof}
{Let $|G|=p^{\alpha_1}_1\cdots p^{\alpha_s}_s$, where $p_1,\ldots,p_s$ are distinct primes and $\alpha_1,\ldots,\alpha_s$ are positive integers. Assume that $G\cong \mathbb{Z}_{p^{\alpha_{11}}_1}\times \cdots \times \mathbb{Z}_{p^{\alpha_{1k_1}}_1}\times \cdots \times \mathbb{Z}_{p^{\alpha_{s1}}_s}\times \cdots \times \mathbb{Z}_{p^{\alpha_{sk_s}}_s}$, where $k_i$ is a positive integer and $\alpha_{i1}+\cdots+\alpha_{ik_i}=\alpha_i$, for $i=1,\ldots,s$. We claim that $k_i=1$ for each $i$, $1\leq i\leq s$. By contradiction assume that $k_i\neq 1$, for some $i$, $1\leq i\leq s$. Let $n(k_i,p_i)$ be the number of subgroups of order $p_i$ in $\mathbb{Z}_{p^{\alpha_{i1}}_i}\times \cdots \times \mathbb{Z}_{p^{\alpha_{ik_i}}_i}$. Obviously, the number of subgroups of order $p_i$ in two groups $\mathbb{Z}_{p^{\alpha_{i1}}_i}\times \cdots \times \mathbb{Z}_{p^{\alpha_{ik_i}}_i}$ and $(\mathbb{Z}_{p_i})^{k_i}$ are the same. Hence by \cite[p.59]{burn}, we have $n(k_i,p_i)=(p_i^{k_i}-1)/(p_i-1)$. Clearly, $d(\mathbb{Z}_{p^{\alpha_{i1}}_i})=1+n(k_i-1,p_i)+\sum_{j\neq i}n(k_j,p_j)$ and $d(\{e\})=\sum_{j=1}^{s}n(k_j,p_j)$. Since $\Pi(G)$ is a regular graph, so $n(k_i,p_i)=1+n(k_i-1,p_i)$. This implies that $p_i^{k_i-1}=1$ and hence $k_i=1$, a contradiction. The claim is proved. Thus $G$ is a cyclic group of order $p_1^{\alpha_1}\cdots p_s^{\alpha_s}$ and by Theorem \ref{cyclic}, $\Pi(G)\cong P_{\alpha_1+1}\square \cdots \square P_{\alpha_s+1}$. Now, since $\Pi(G)$ is a regular graph, $\alpha_1=\cdots=\alpha_s=1$ and $\Pi(G)\cong Q_s$.
}
\end{proof}

\begin{thm}
Let $G$ be a finite group. If $\Pi(G)$ is a $2$-regular graph, then $\Pi(G)\cong C_4$ and $G\cong \mathbb{Z}_{pq}$, where $p$ and $q$ are distinct primes.
\end{thm}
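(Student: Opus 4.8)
The plan is to read off strong restrictions directly from the degrees of the canonical vertices $\{e\}$ and the minimal (prime-order) subgroups, and then to recognize $G$ via Theorem \ref{cyclic}. Since $\Pi(G)$ is $2$-regular, every vertex has degree exactly $2$; I will only ever use the degrees of $\{e\}$ and of the two minimal subgroups, so neither connectivity nor girth is needed here.

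First I would analyze the vertex $\{e\}$. Its neighbours in $\Pi(G)$ are precisely the subgroups of $G$ of prime order, so $d(\{e\}) = 2$ forces $G$ to have exactly two subgroups of prime order. Recall the classical fact that for each prime $p$ dividing $|G|$ the number of subgroups of order $p$ is $\equiv 1 \pmod p$, and in particular is at least one by Cauchy. Consequently at most two primes divide $|G|$; and a single prime is impossible, since a nonzero count that is $\equiv 1 \pmod p$ can never equal $2$. Hence $|G| = p^{a} q^{b}$ for distinct primes $p, q$, with a unique subgroup $A$ of order $p$ and a unique subgroup $B$ of order $q$; being unique, both $A$ and $B$ are normal in $G$.

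Next I would bound the exponents $a$ and $b$ using $d(A) = 2$. The product $AB$ is a subgroup of order $pq$, is cyclic, contains $A$ with $[AB : A] = q$, and is the \emph{only} subgroup of order $pq$ containing $A$, since any such subgroup must contain the unique order-$q$ subgroup $B$. Together with $\{e\}$ this already exhibits two neighbours of $A$, so $A$ can have no neighbour of order $p^{2}$. But if $a \geq 2$, choose a Sylow $p$-subgroup $P \supseteq A$; uniqueness forces the order-$p$ subgroup of $Z(P)$ to be $A$, so $A \leq Z(P)$, and lifting a subgroup of order $p$ in $P/A$ produces $C \leq P$ with $|C| = p^{2}$, $A \subset C$ and $[C : A] = p$. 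As $p^{2} \notin \{1, pq\}$, this $C$ is a third neighbour of $A$, a contradiction. Therefore $a = 1$, and symmetrically $b = 1$, so $|G| = pq$.

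Finally, with $|G| = pq$ and $A, B$ normal of coprime orders satisfying $A \cap B = \{e\}$ and $AB = G$, I conclude $G \cong \mathbb{Z}_p \times \mathbb{Z}_q \cong \mathbb{Z}_{pq}$; Theorem \ref{cyclic} then yields $\Pi(G) \cong P_2 \square P_2 \cong C_4$. I expect the middle step to be the main obstacle: converting the purely local degree condition $d(A) = 2$ into the global conclusion $a = b = 1$, i.e. ruling out every subgroup of order $p^{2}$ or $q^{2}$. The cleanest route is the observation that the unique minimal subgroup is central in its Sylow subgroup, which is exactly what manufactures the forbidden order-$p^{2}$ neighbour as soon as $a \geq 2$.
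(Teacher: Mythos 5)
Your proposal is correct and follows essentially the same route as the paper: read off from $d(\{e\})=2$ that there are exactly two minimal subgroups (one per prime, each normal), form the product $HK$ of order $pq$ to get two neighbours of each minimal subgroup, and then use $2$-regularity to force these minimal subgroups to be Sylow, giving $G=HK\cong\mathbb{Z}_{pq}$ and $\Pi(G)\cong C_4$. The only difference is one of detail: where the paper simply asserts that $2$-regularity makes $H$ and $K$ Sylow, you spell out the construction of the forbidden order-$p^2$ neighbour via the centre of a Sylow $p$-subgroup and the correspondence theorem (for which normality of $A$ in $G$ already suffices, so the centrality step is not even needed).
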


\begin{proof}
{Since $d(\{e\})=2$, the order of $G$ has at most two distinct prime divisors. Clearly, a $p$-group cannot have exactly two subgroups of order $p$. So assume that $p$ and $q$ are two distinct prime divisors of $|G|$. Suppose that $H$ and $K$ are subgroups of $G$ such that $|H|=p$ and $|K|=q$. Since $d(\{e\})=2$, $H$ and $K$ are normal subgroups of $G$. Hence $HK$ is a subgroup of $G$ and $\{e\}$ -- $H$ -- $HK$ -- $K$ -- $\{e\}$ is a cycle in $\Pi(G)$. Now since $\Pi(G)$ is a $2$-regular graph, $H$ is a Sylow $p$-subgroup and $K$ is a Sylow $q$-subgroup of $G$. Thus $G=HK\cong \mathbb{Z}_{pq}$ and $\Pi(G)\cong C_4$.
}
\end{proof}

\section{Connectivity}

In this section, we study those groups whose prime index graphs are connected. First we have the following lemma.

\begin{lem}
Let $G$ be an infinite group. Then $\Pi(G)$ is not connected. Moreover, if $G$ is a simple group, then $G$ is an isolated vertex in $\Pi(G)$.
\end{lem}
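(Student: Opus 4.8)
The plan is to exploit the fact that adjacency in $\Pi(G)$ can only relate two subgroups whose orders differ by a single prime factor, so that the property ``being a finite group'' is constant on each connected component.

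For the first assertion, I would first record the key invariant: if $H$ and $K$ are adjacent in $\Pi(G)$, then one of them is contained in the other with prime index, so the orders $|H|$ and $|K|$ differ by a factor of a prime $p$. In particular $H$ is finite if and only if $K$ is finite. Consequently, along any path in $\Pi(G)$ all vertices are simultaneously finite or simultaneously infinite, so each connected component of $\Pi(G)$ consists either entirely of finite subgroups or entirely of infinite subgroups. Since $G$ is infinite, the vertices $\{e\}$ and $G$ are both present and distinct, with $\{e\}$ finite and $G$ infinite; hence they lie in different components and $\Pi(G)$ is not connected.

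For the second assertion, with $G$ an infinite simple group, I would show that $G$ has no proper subgroup of finite index, which forces $G$ to be isolated since every neighbour of $G$ would have to be a subgroup of prime (hence finite) index. Suppose toward a contradiction that $K < G$ with $[G:K]=p$ prime. The action of $G$ on the left cosets of $K$ yields a homomorphism $\varphi\colon G\to S_p$ whose kernel is $\mathrm{Core}_G(K)$, a normal subgroup of $G$ contained in $K$. As $G$ is simple, this kernel is either $\{e\}$ or $G$; it cannot be $G$ (otherwise $gK=K$ for all $g\in G$, forcing $K=G$), so it is trivial and $\varphi$ embeds $G$ into $S_p$. This contradicts the infiniteness of $G$. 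Therefore $G$ has no subgroup of prime index, so $G$ is adjacent to no vertex and is an isolated vertex of $\Pi(G)$.

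The only genuinely delicate point is the role of infiniteness in the second part: for finite simple groups the conclusion fails (for instance $A_4$ has index $5$ in $A_5$, so $A_5$ is adjacent to $A_4$ in $\Pi(A_5)$), and it is precisely the coset representation $G\hookrightarrow S_p$, impossible for infinite $G$, that rules out prime-index subgroups here. Everything else reduces to the elementary order-comparison invariant, so I expect no further obstacles.
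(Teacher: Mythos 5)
Your proof is correct and takes essentially the same route as the paper: your first part merely makes explicit the paper's (uncited, ``clear'') observation that no path in $\Pi(G)$ can join the finite vertex $\{e\}$ to the infinite vertex $G$, and your second part gives a self-contained coset-action proof of exactly the fact the paper quotes from Rose, namely that an infinite simple group has no proper subgroup of finite index. Nothing further is needed.
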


\begin{proof}
{It is clear that if $G$ is an infinite group, then there is no path between $\{e\}$ and $G$ in $\Pi(G)$. So $\Pi(G)$ is not connected. If $G$ is an infinite simple group, by \cite[Corollary 4.15]{rose}, $G$ cannot have a proper subgroup of finite index. Hence $G$ is an isolated vertex of $\Pi(G)$.
}
\end{proof}

By \cite[p.292]{rose}, a finite group $G$ is supersolvable if and only if each subgroup of $G$ satisfies the converse of Lagrange's Theorem. So for finite supersolvable groups $G$ such as finite abelian groups and finite $p$-groups, $\Pi(G)$ is connected. (Note that every subgroup of $G$ is connected to $\{e\}$.)

\begin{thm}
\label{abeli}
Let $G$ be a finite group and $N$ be a normal subgroup of $G$. If $\Pi(N)$ is a connected graph and  also for every subgroup $H/N$ of $G/N$, $\Pi(H/N)$ is a connected graph, then $\Pi(G)$ is connected.
\end{thm}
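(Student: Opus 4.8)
The plan is to show that every subgroup $K$ of $G$ lies in the connected component of the trivial subgroup $\{e\}$ in $\Pi(G)$. The whole argument rests on one elementary observation about how prime-index adjacency behaves: if $A$ and $B$ are subgroups of $G$ with $B \le A$, then whether they are adjacent in $\Pi(G)$ depends only on the number $[A:B]$ and not on the ambient group. Consequently, for any subgroup $L \le G$, a path in $\Pi(L)$ is automatically a path in $\Pi(G)$; and for any normal subgroup $M \trianglelefteq L$, the subgroup correspondence carries a path in $\Pi(L/M)$ to a path in $\Pi(L) \subseteq \Pi(G)$, because $[A/M : B/M] = [A:B]$ for subgroups $M \le B \le A \le L$. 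I would record these two lifting principles at the outset.

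Given an arbitrary subgroup $K$ of $G$, I would first connect $K$ to $K \cap N$. Since $N$ is normal in $G$, we have $K \cap N \trianglelefteq K$ and, by the second isomorphism theorem, $K/(K \cap N) \cong KN/N$. Now $KN/N$ is a subgroup of $G/N$, so with $H = KN$ the hypothesis gives that $\Pi(KN/N)$ is connected, whence $\Pi(K/(K\cap N))$ is connected as well (isomorphic groups have isomorphic prime index graphs). In this connected graph there is a path from the vertex $(K\cap N)/(K\cap N)$ to the vertex $K/(K\cap N)$; by the second lifting principle (with $L = K$ and $M = K \cap N$) this path lifts to a path in $\Pi(G)$ joining $K \cap N$ to $K$. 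If $K \le N$ the quotient is trivial and this step is vacuous.

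It then remains to connect $K \cap N$ to $\{e\}$. But $K \cap N$ is a subgroup of $N$, and $\Pi(N)$ is connected by hypothesis, so the first lifting principle yields a path in $\Pi(G)$ from $K \cap N$ to $\{e\}$. Concatenating the two paths connects $K$ to $\{e\}$, and since $K$ was arbitrary, $\Pi(G)$ is connected.

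The step I expect to be the crux is the passage through the quotient $K/(K\cap N)$: it is exactly the device that lets the full-strength hypothesis (connectivity of $\Pi(H/N)$ for every subgroup $H/N$, not merely for $H = G$) reach subgroups $K$ that neither contain $N$ nor sit inside $N$. The hypotheses on $\Pi(N)$ and on $\Pi(G/N)$ alone account only for the subgroups contained in $N$ or containing $N$; the isomorphism $K/(K\cap N) \cong KN/N$ is what bridges a general $K$ to these two controlled families, and verifying that prime indices are preserved under both restriction to a subgroup and the correspondence theorem for a quotient is the only point requiring genuine care.
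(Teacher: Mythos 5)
Your proposal is correct and follows essentially the same route as the paper: connect an arbitrary subgroup $K$ to $K\cap N$ by applying the hypothesis to $H=KN$ together with the isomorphism $KN/N\cong K/(K\cap N)$, then connect $K\cap N$ to $\{e\}$ inside $\Pi(N)$. The only difference is that you spell out the lifting principles (index preservation under the correspondence theorem and under restriction to a subgroup) that the paper leaves implicit, which is a welcome clarification rather than a deviation.
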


\begin{proof}
{Assume that $H$ is a subgroup of $G$. Hence $\Pi(HN/N)$ is a connected graph. Since $HN/N\cong H/(H\cap N)$, so the graph $\Pi(H/H\cap N)$ is connected. This implies that there is a path between $H$ and $H\cap N$ in $\Pi(G)$. Now, since $\Pi(N)$ is connected, there is a path between $H\cap N$ and $\{e\}$ in $\Pi(N)$. Thus every subgroup of $G$ is connected to $\{e\}$. Therefore $\Pi(G)$ is connected.
}
\end{proof}

Now, we prove that the prime index graph of every finite solvable group is connected.

\begin{thm}
\label{solvable}
Let $G$ be a finite solvable group. Then $\Pi(G)$ is connected.
\end{thm}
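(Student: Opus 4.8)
The plan is to argue by induction on $|G|$, with Theorem \ref{abeli} supplying the inductive step. For the base of the induction I would take $G$ abelian: by the remark preceding Theorem \ref{abeli}, every finite abelian group is supersolvable, so $\Pi(G)$ is connected. This simultaneously disposes of the trivial group and of $\mathbb{Z}_p$, and more generally of any finite abelian $G$.

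For the inductive step, assume $G$ is a nonabelian finite solvable group and that the assertion holds for all solvable groups of strictly smaller order. The decisive choice is to set $N = G'$, the derived subgroup. Since $G$ is nonabelian we have $N \neq \{e\}$, and since $G$ is a nontrivial solvable group its derived series terminates at $\{e\}$, which forces $N \neq G$; hence $N$ is a nontrivial proper normal subgroup of $G$. As $N$ is itself solvable with $|N| < |G|$, the induction hypothesis gives that $\Pi(N)$ is connected. On the quotient side, $G/N = G/G'$ is abelian, so every subgroup $H/N$ of $G/N$ is abelian as well; applying the supersolvable remark once more yields that $\Pi(H/N)$ is connected for each such $H/N$. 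Both hypotheses of Theorem \ref{abeli} are thereby met, and that theorem delivers the connectivity of $\Pi(G)$, closing the induction.

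I expect this argument to be short precisely because Theorem \ref{abeli} was engineered to carry it, and the single point genuinely requiring care is the quotient hypothesis of that theorem: it demands that $\Pi(H/N)$ be connected for \emph{every} subgroup $H/N$ of $G/N$, not merely for $G/N$ itself. A naive reduction (say taking $N$ to be an arbitrary minimal normal subgroup and invoking induction on the quotient) would work too, but the choice $N = G'$ is cleaner, since making $G/N$ abelian renders all its subgroups abelian and lets the supersolvable remark apply uniformly, thereby avoiding any circular appeal to the connectivity of larger subquotients of $G$. Verifying that $G'$ is both nontrivial and proper is then the only nonformal ingredient, and it follows immediately from $G$ being nonabelian and solvable.
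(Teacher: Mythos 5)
Your proof is correct and takes essentially the same route as the paper: both arguments set $N=G'$ and apply Theorem \ref{abeli}, with the abelian quotient $G/G'$ guaranteeing that $\Pi(H/N)$ is connected for \emph{every} subgroup $H/N$ of $G/N$. The only cosmetic difference is that the paper inducts on the derived length $n$ (with $G^{(n)}=\{e\}$) rather than on $|G|$, which changes nothing of substance.
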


\begin{proof}
{Since $G$ is a solvable group, $G^{(n)}=\{e\}$, for some positive integer $n$. We prove the theorem by applying the induction on $n$. If $G'=\{e\}$, then $G$ is an abelian group and so $\Pi(G)$ is a connected graph. Assume that $n>1$ and $G^{(n)}=\{e\}$. By the induction hypothesis, $\Pi(G')$ is connected. Now, by Theorem \ref{abeli}, $\Pi(G)$ is a connected graph.
}
\end{proof}

If $G$ is a group of odd order, then $G$ is solvable (Feit-Thompson Theorem \cite{feit}) and by Theorem \ref{solvable}, $\Pi(G)$ is connected. Moreover, suppose that $|G|=2^nm$, where $m$ and $n$ are positive integers with $m$ odd. If $G$ has a cyclic Sylow $2$-subgroup, then by \cite[p.148]{Dummit}, $G$ has a normal subgroup of order $m$ and hence $G$ is a solvable group. Thus $\Pi(G)$ is a connected graph.
Since every subgroup of a solvable group is solvable, by Theorems \ref{abeli} and \ref{solvable}, we have the next result.

\begin{cor}
\label{sol}
Let $G$ be a finite group and $N$ be a normal subgroup of $G$. If $\Pi(N)$ is a connected graph and $G/N$ is a solvable group, then $\Pi(G)$ is connected. \end{cor}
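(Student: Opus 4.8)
The plan is to deduce this directly from Theorem \ref{abeli}, whose two hypotheses I would verify one at a time. That theorem requires, first, that $\Pi(N)$ be connected and, second, that $\Pi(H/N)$ be connected for \emph{every} subgroup $H/N$ of $G/N$. The first condition is exactly one of our assumptions, so essentially all of the (minimal) work goes into establishing the second.

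For the second condition I would invoke the fact that $G/N$ is solvable together with the standard result that every subgroup of a solvable group is again solvable. By the correspondence theorem, every subgroup of $G/N$ has the form $H/N$ for some intermediate subgroup $N \leq H \leq G$; each such $H/N$ is a subgroup of the solvable group $G/N$ and is therefore itself a finite solvable group. Applying Theorem \ref{solvable} to the finite solvable group $H/N$ then gives that $\Pi(H/N)$ is connected, which is precisely what the second hypothesis of Theorem \ref{abeli} demands.

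With both hypotheses of Theorem \ref{abeli} now in hand, I would simply apply that theorem to conclude that $\Pi(G)$ is connected. I do not anticipate any genuine obstacle: the statement is a formal consequence of the two named theorems, and the only point meriting a word of care is the routine observation---via the correspondence theorem---that the subgroups of the quotient $G/N$ are exactly the groups $H/N$ with $N \leq H \leq G$, each of which inherits solvability from $G/N$.
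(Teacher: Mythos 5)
Your proposal is correct and follows exactly the paper's own route: the paper proves this corollary by the one-line observation preceding it (``Since every subgroup of a solvable group is solvable, by Theorems \ref{abeli} and \ref{solvable}, we have the next result''), which is precisely your argument of verifying the hypotheses of Theorem \ref{abeli} via Theorem \ref{solvable} applied to each subgroup $H/N$ of the solvable quotient $G/N$. Your write-up merely makes explicit the correspondence-theorem step that the paper leaves implicit.
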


\begin{thm}
\label{normal}
Let $G$ be a group and $N$ be a normal subgroup of $G$. If $\Pi(G)$ is a connected graph, then $\Pi(N)$ and $\Pi(G/N)$ are connected graphs.
\end{thm}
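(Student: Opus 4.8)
The plan is to reduce connectivity of each of the two target graphs to the statement that every vertex is joined to the trivial subgroup $\{e\}$, and then to transport paths from $\Pi(G)$ down to $\Pi(N)$ and $\Pi(G/N)$ by means of two natural projection maps on subgroups. The obstruction to a naive argument is that a path in $\Pi(G)$ between two subgroups of $N$ (respectively, between two subgroups containing $N$) need not remain inside the lattice of subgroups of $N$ (respectively, inside the interval $[N,G]$), so such a path cannot be reused verbatim. The remedy is to apply a retraction that sends every edge of $\Pi(G)$ either to an edge or to a single vertex of the target graph; this turns any $\Pi(G)$-path into a walk in the target, which after deletion of repetitions produces the desired connection.

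For $\Pi(N)$ I would use the map $M \mapsto M \cap N$ on subgroups. The key step is to show that if $M_1 \subseteq M_2$ with $[M_2 : M_1]=p$ prime, then $[M_2 \cap N : M_1 \cap N] \in \{1,p\}$. Since $N$ is normal in $G$, $M_2 \cap N$ is normal in $M_2$, so $M_1(M_2\cap N)$ is a subgroup lying between $M_1$ and $M_2$; as $[M_2:M_1]$ is prime it equals either $M_1$ or $M_2$. In the first case $M_2\cap N \subseteq M_1$, forcing $M_1\cap N = M_2\cap N$ (a collapse); in the second case the second isomorphism theorem gives $[M_2\cap N : M_1\cap N]=[M_1(M_2\cap N):M_1]=[M_2:M_1]=p$ (an edge of $\Pi(N)$). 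Because the index of one subgroup of $N$ in another is the same whether computed in $N$ or in $G$, adjacency among subgroups of $N$ is identical in $\Pi(N)$ and in $\Pi(G)$. Thus, given any $L \subseteq N$, a $\Pi(G)$-path from $L$ to $\{e\}$ (which exists since $\Pi(G)$ is connected) maps under $M\mapsto M\cap N$ to a walk in $\Pi(N)$ from $L\cap N=L$ to $\{e\}$, proving $\Pi(N)$ connected.

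For $\Pi(G/N)$ I would use the map $L \mapsto LN/N$. The analogous key step is that if $L_1\subseteq L_2$ with $[L_2:L_1]=p$, then $[L_2N/N : L_1N/N]\in\{1,p\}$. By Dedekind's modular law, $L_2\cap L_1N = L_1(L_2\cap N)$, and normality of $N$ makes this a subgroup between $L_1$ and $L_2$, hence equal to $L_1$ or $L_2$; computing the index of $L_1N/N$ in $L_2N/N$ as $[L_2 : L_1(L_2\cap N)]$ then yields $1$ or $p$ accordingly. Using that $[H/N:K/N]=[H:K]$ for $N\subseteq K\subseteq H$, adjacency in the interval $[N,G]$ matches adjacency in $\Pi(G/N)$, and for any $H$ with $N\subseteq H\subseteq G$ a $\Pi(G)$-path from $H$ to $N$ maps to a walk in $\Pi(G/N)$ from $H/N$ to $\{e\}$, proving $\Pi(G/N)$ connected.

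The main obstacle is precisely the index dichotomy underlying both retractions: one must verify that intersecting with $N$, or multiplying by $N$, can only preserve a prime index or collapse it, never split it into a composite. This is exactly where the normality of $N$ is indispensable, since it guarantees that $M_2\cap N$ is normal in $M_2$ (equivalently that $L_1(L_2\cap N)$ is a subgroup), allowing the prime $[M_2:M_1]$ to control the image index. Once this lemma is in hand, the collapse-or-edge behaviour of the two maps makes both connectivity conclusions immediate, and no further case analysis is required.
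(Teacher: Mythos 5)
Your proposal is correct and takes essentially the same approach as the paper: both arguments retract a path of $\Pi(G)$ into the target graph via the maps $M\mapsto M\cap N$ (for $\Pi(N)$) and $L\mapsto LN/N$ (for $\Pi(G/N)$), show that each edge either collapses to a vertex or maps to an edge, and then delete repetitions to obtain a path. The only difference is in how the collapse-or-prime dichotomy is verified --- you use the intermediate subgroup $M_1(M_2\cap N)$ together with Dedekind's law and the second isomorphism theorem, whereas the paper deduces it from the counting identity $[L_{i+1}N:L_iN]\,[L_{i+1}\cap N:L_i\cap N]=p$ --- which is a cosmetic variation, not a different method.
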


\begin{proof}
{First we prove that $\Pi(N)$ is a connected graph. Let $H$ and $K$ be two distinct subgroups of $N$. Since $\Pi(G)$ is a connected graph, so there is a path $H$ -- $L_1$ -- $\cdots$ -- $L_t$ -- $K$ from $H$ to $K$ in $\Pi(G)$. We claim that by removing the same consecutive vertices in $H$ -- $L_1\cap N$ -- $\cdots$ -- $L_t\cap N$ -- $K$ and keeping one of them we obtain a walk from $H$ to $K$ in $\Pi(N)$. With no loss of generality, assume that $L_i\subseteq L_{i+1}$ and $[L_{i+1} : L_i]=p$, for some prime number $p$. Thus $L_i\cap N\subseteq L_{i+1}\cap N$ and we have $$[L_{i+1}\cap N : L_i\cap N]=\frac{|L_{i+1}\cap N|}{|L_i\cap N|}=\frac{|L_iN|}{|L_{i+1}N|}\frac{|L_{i+1}|}{|L_i|}.$$ Hence $[L_{i+1}N : L_iN][L_{i+1}\cap N : L_i\cap N]=p$. Therefore $L_{i+1}\cap N=L_i\cap N$ or $[L_{i+1}\cap N : L_i\cap N]=p$. So the claim is proved. Hence there is a path from $H$ to $K$ in $\Pi(N)$ which implies that $\Pi(N)$ is connected. \\
\indent Next, assume that $H$ and $K$ are two distinct subgroups of $G$ containing $N$. Suppose that $H$ -- $L_1$ -- $\cdots$ -- $L_t$ -- $K$ is a path from $H$ to $K$ in $\Pi(G)$. Similar to the previous case, one can prove that $H/N$ -- $L_1N/N$ -- $\cdots$ -- $L_tN/N$ -- $K/N$ is a walk from $H/N$ to $K/N$ in $\Pi(G/N)$. Thus $\Pi(G/N)$ is also a connected graph.
}
\end{proof}

Now, we propose the following problem.

\noindent {\bf Problem.}
Let $G$ be a group and $N$ be a normal subgroup of $G$. If $\Pi(N)$ and $\Pi(G/N)$ are both connected, then is it true that $\Pi(G)$ is connected?

By Theorem \ref{normal}, we have the next corollary.

\begin{cor}
Let $G\cong H\times K$, for some groups $H$ and $K$. If $\Pi(G)$ is connected, then both $\Pi(H)$ and $\Pi(K)$ are connected.
\end{cor}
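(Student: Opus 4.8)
The plan is to obtain this as an immediate application of Theorem \ref{normal}, by realizing each of the two factors as a normal subgroup of $G$ whose corresponding quotient is the other factor. This reduces the corollary to a single invocation of the already-established result about normal subgroups, once the relevant group isomorphisms are translated into isomorphisms of prime index graphs.

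Concretely, I would set $N = H \times \{e\}$. This is a normal subgroup of $G \cong H \times K$, and the natural map gives $N \cong H$ and $G/N \cong K$. The key bookkeeping observation is that an isomorphism of groups induces a lattice isomorphism between their subgroup lattices that preserves all indices; in particular, if $\varphi\colon A \to B$ is a group isomorphism, then $\varphi$ carries a subgroup $H_1$ of $A$ of prime index in $H_2$ to a subgroup $\varphi(H_1)$ of the same prime index in $\varphi(H_2)$. Hence $\varphi$ restricts to a graph isomorphism $\Pi(A) \cong \Pi(B)$. Applying this to $N \cong H$ and to $G/N \cong K$ yields $\Pi(N) \cong \Pi(H)$ and $\Pi(G/N) \cong \Pi(K)$.

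With these identifications in hand, I would simply apply Theorem \ref{normal} with this choice of $N$: since $\Pi(G)$ is connected by hypothesis, the theorem guarantees that both $\Pi(N)$ and $\Pi(G/N)$ are connected. Transporting connectivity across the graph isomorphisms above, $\Pi(H)$ and $\Pi(K)$ are connected, which is exactly the claim. Notice that this single application handles both factors at once, since one factor appears as $N$ and the other as $G/N$; there is no need to invoke the theorem twice.

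I do not expect a genuine obstacle here, as the statement is essentially a specialization of Theorem \ref{normal}. The only point requiring care is the remark that isomorphic groups have isomorphic prime index graphs, so that the abstract conclusions about $\Pi(N)$ and $\Pi(G/N)$ can be rephrased in terms of $\Pi(H)$ and $\Pi(K)$. Since that fact is elementary and follows directly from the index-preserving property of a group isomorphism, the proof is short.
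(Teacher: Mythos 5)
Your proposal is correct and follows exactly the route the paper intends: the paper derives this corollary directly from Theorem \ref{normal} by viewing one factor as a normal subgroup $N = H\times\{e\}$ with quotient $G/N\cong K$. Your additional remark that group isomorphisms induce index-preserving isomorphisms of prime index graphs is the (implicit) bookkeeping the paper leaves to the reader, so nothing is missing.
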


We close this article by the study of the connectivity of $\Pi(A_n)$ and $\Pi(S_n)$. Moreover, we show that the prime index graph of all groups up to 500 elements is connected except for $A_6$.

\noindent {\bf Remark.}
Let $n$ be a positive integer. Then $\Pi(A_n)$ is connected if and only if $n\leq 5$. Also, $\Pi(S_n)$ is a connected graph if and only if $n\leq 5$.
To prove the remark first assume that $n\leq 4$. Hence $A_n$ is a solvable group and by Theorem \ref{solvable}, $\Pi(A_n)$ is a connected graph. If $n=5$, we know that every proper subgroup of $A_5$ is solvable and $A_5$ contains a maximal subgroup of prime index, then $\Pi(A_5)$ is connected. Also if $n\leq 5$, since $A_n$ is a normal subgroup of $S_n$ and $\Pi(A_n)$ is connected, by Corollary \ref{sol}, $\Pi(S_n)$ is connected. Now, assume that $n>5$. If $n$ is not a prime number, then by \cite[p.305]{An}, $A_n$ has no subgroup of prime index and hence $A_n$ is an isolated vertex of $\Pi(A_n)$. Otherwise, if $H$ is a maximal subgroup of $A_n$ of prime index, then $H\cong A_{n-1}$ (see \cite[p.305]{An}). Since $n-1$ is not a prime number, so $\Pi(A_n)$ is not connected.
Thus by Theorem \ref{normal}, $\Pi(S_n)$ is not connected.

\begin{thm}
Let $G$ be a group and $|G|\leq 500$. If $\Pi(G)$ is not connected, then $G\cong A_6$.
\end{thm}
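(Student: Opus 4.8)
The plan is to show that every group $G$ with $|G| \le 500$ has connected prime index graph unless $G \cong A_6$, by first drastically narrowing the class of potential counterexamples using the connectivity results already established. Any finite solvable group has connected $\Pi(G)$ by Theorem \ref{solvable}, so a disconnected example must be nonsolvable. By the Feit--Thompson remark and the cyclic Sylow $2$-subgroup remark, we may further assume $|G|$ is even with a noncyclic Sylow $2$-subgroup. The main reduction is to invoke Corollary \ref{sol}: if $G$ has a normal subgroup $N$ with $G/N$ solvable and $\Pi(N)$ connected, then $\Pi(G)$ is connected. Since every finite group has a largest solvable normal subgroup (its solvable radical), the obstruction to connectivity is concentrated in the nonsolvable composition factors. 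Thus I would enumerate the nonsolvable groups of order at most $500$, which is a short list governed by the nonabelian simple groups whose orders divide a number at most $500$: namely $A_5$ (order $60$), $\mathrm{PSL}(2,7) \cong \mathrm{PSL}(3,2)$ (order $168$), $A_6 \cong \mathrm{PSL}(2,9)$ (order $360$), and $\mathrm{PSL}(2,8)$ (order $504 > 500$, hence excluded).

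The second step is to handle each admissible nonsolvable group. Because a group is built from its composition factors, and by Corollary \ref{sol} connectivity of $\Pi(G)$ follows once we know $\Pi(N)$ is connected for a normal subgroup $N$ with solvable quotient, the critical task is to verify connectivity of $\Pi(S)$ for the relevant simple groups $S$ and then propagate upward. For $A_5$ I would reuse the argument from the preceding Remark: every proper subgroup of $A_5$ is solvable and $A_5$ contains a maximal subgroup of index $5$ (a copy of $A_4$), so $A_5$ is adjacent to a connected solvable subgroup and $\Pi(A_5)$ is connected. For $\mathrm{PSL}(2,7)$, the maximal subgroups are $S_4$ (two conjugacy classes, each of index $7$) and the Frobenius group of order $21$ (index $8$); since index $7$ is prime and $S_4$ is solvable with connected prime index graph, $\mathrm{PSL}(2,7)$ is adjacent to a connected solvable subgroup, hence $\Pi(\mathrm{PSL}(2,7))$ is connected. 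The same template then settles every $G$ whose only nonsolvable composition factor is $A_5$ or $\mathrm{PSL}(2,7)$: by Corollary \ref{sol} applied to the appropriate normal subgroup, $\Pi(G)$ is connected.

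The crux is $A_6$, which I claim is the unique exception. The maximal subgroups of $A_6$ are $A_5$ (two classes, index $6$), $S_4$ (two classes, index $15$), and the group $(\mathbb{Z}_3 \times \mathbb{Z}_3) \rtimes \mathbb{Z}_4$ of order $36$ (index $10$); crucially none of these indices, $6$, $15$, or $10$, is prime. Therefore $A_6$ has no subgroup of prime index, which forces $A_6$ to be an isolated vertex of $\Pi(A_6)$, so $\Pi(A_6)$ is disconnected. To finish, I must confirm that $A_6$ does not get ``rescued'' as a proper normal subgroup of a larger $G$ with $|G| \le 500$: by Theorem \ref{normal}, if $\Pi(G)$ were connected then $\Pi(A_6)$ would be connected, a contradiction, so any $G$ of order at most $500$ having $A_6$ as a composition factor is itself disconnected. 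Since the only such group within the order bound is $A_6$ itself (the next candidates, $S_6$ and $\mathrm{PGL}(2,9)$, have order $720 > 500$), the disconnected examples are exactly $A_6$.

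The main obstacle I anticipate is the exhaustive identification step: correctly enumerating which orders up to $500$ admit a nonsolvable group and pinning down the maximal subgroup structure (and the associated indices) of $A_5$, $\mathrm{PSL}(2,7)$, and especially $A_6$. The connectivity propagation via Corollaries \ref{sol} and Theorem \ref{normal} is mechanical once these local computations are in hand; the delicate point is verifying that for $A_5$ and $\mathrm{PSL}(2,7)$ a prime-index maximal subgroup exists (so the simple group is never isolated), while for $A_6$ every maximal index is composite (so it is necessarily isolated). This asymmetry is precisely what singles out $A_6$, and making it rigorous requires the explicit subgroup lattice data for these three groups together with a careful check that no nonsolvable group of order in the range $361$--$500$ has been overlooked.
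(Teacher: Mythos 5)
Your overall strategy --- reduce to nonsolvable groups, classify the nonabelian simple groups of order at most $500$ as $A_5$, $\mathrm{PSL}(2,7)$, $A_6$, check these three, and propagate connectivity upward --- is close in spirit to the paper's, and your local computations are correct ($A_4\leq A_5$ and $S_4\leq \mathrm{PSL}(2,7)$ of prime index with all proper subgroups solvable; no prime-index subgroup in $A_6$, so $A_6$ is an isolated vertex). However, the propagation step has a genuine gap. You claim that every $G$ of order at most $500$ whose only nonsolvable composition factor is $A_5$ or $\mathrm{PSL}(2,7)$ is settled ``by Corollary \ref{sol} applied to the appropriate normal subgroup.'' Corollary \ref{sol} requires a normal subgroup $N$ with $\Pi(N)$ \emph{already known} to be connected and $G/N$ \emph{solvable}. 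For $G=\mathrm{SL}(2,5)$ (order $120$) or $G=\mathrm{SL}(2,7)$ (order $336$) no such $N$ exists: these groups are perfect, so the only normal subgroup with solvable quotient is $G$ itself, while the proper normal subgroups are central of order $2$ with nonsolvable quotients $A_5$, respectively $\mathrm{PSL}(2,7)$. The same defect infects every group of order at most $500$ whose solvable residual is $\mathrm{SL}(2,5)$ (such groups exist in orders $240$, $360$, $480$), since the needed base case ``$\Pi(\mathrm{SL}(2,5))$ is connected'' is never established. Your appeal to the solvable radical points the quotient the wrong way: for $\mathrm{SL}(2,5)$ the radical is the center $\mathbb{Z}_2$, and knowing $\Pi(N)$ and $\Pi(G/N)$ are connected does not yield $\Pi(G)$ connected by any result in the paper --- that implication is precisely the paper's open Problem.

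The repair is what the paper actually does: run a minimal-counterexample argument with Theorem \ref{abeli}, whose hypothesis asks that $\Pi(H/N)$ be connected for \emph{every} subgroup $H/N$ of $G/N$ (no solvability of the quotient needed). If $G$ is a smallest group with $\Pi(G)$ disconnected, then any nontrivial proper normal subgroup $N$ satisfies $|N|<|G|$ and every subgroup of $G/N$ has order $<|G|$, so Theorem \ref{abeli} would make $\Pi(G)$ connected; hence $G$ is simple, and the classification of simple groups of order at most $500$ finishes the identification of $A_6$, after which groups of order in $(360,500]$ are handled by noting that any nontrivial proper normal subgroup gives $|N|,|G/N|\leq 250<360$. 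This sidesteps all structure theory of nonsolvable non-simple groups; in particular it never needs to see $\mathrm{SL}(2,5)$ or $\mathrm{SL}(2,7)$ at all. If you prefer to keep your bottom-up enumeration, you must either verify directly that $\Pi(\mathrm{SL}(2,5))$ and $\Pi(\mathrm{SL}(2,7))$ are connected (all their proper subgroups are solvable, and each has a prime-index subgroup: $\mathrm{SL}(2,3)$ of index $5$, respectively the preimage of $S_4$ of index $7$), or replace Corollary \ref{sol} by Theorem \ref{abeli} in the propagation step.
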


\begin{proof}
{Suppose that $G$ is the smallest group such that $\Pi(G)$ is not a connected graph. By Theorem \ref{abeli}, one can see that $G$ is a simple group. Note that by the remark, $\Pi(A_6)$ is not a connected graph. On the other hand by \cite[p.295]{rotman}, if $G$ is a non-abelian simple group of order at most 500, then $G$ is isomorphic to one of the groups $A_5$, PSL$(2,7)$, or $A_6$. By remark, $\Pi(A_5)$ is connected. Also by \cite[Theorem 6.26]{suzuki}, PSL$(2,7)$ contains a maximal subgroup of index 7 and by \cite[p.191]{miller}, all subgroups of PSL$(2,7)$ are solvable. Hence $\Pi(\rm{PSL}(2,7))$ is connected. Thus $G\cong A_6$. Finally, for every non-abelian group $G$ with $360<|G|\leq 500$, since $G$ is not a simple group, so $G$ has a non-trivial proper normal subgroup $N$. Clearly, $|N|$ and $|G/N|$ are both less than 360. Thus by Theorem \ref{abeli}, $\Pi(G)$ is a connected graph.
}
\end{proof}

\end{document}